\documentclass[12pt,a4paper,reqno]{article}

\usepackage[a4paper,width=160mm,top=25mm,bottom=30mm]{geometry}
\usepackage{hyperref}
\linespread{1.1}




\usepackage[utf8]{inputenc}
\usepackage[T1]{fontenc}
\usepackage[english]{babel}
\usepackage{xcolor}
\usepackage{float}
\usepackage{stmaryrd}

\usepackage{amsthm}

\theoremstyle{plain}
\newtheorem{theorem}{Theorem}[section]

\newtheorem{proposition}[theorem]{Proposition}

\newtheorem{lemma}[theorem]{Lemma}


\theoremstyle{remark} 
\newtheorem*{remark}{Remark}

\theoremstyle{definition} 
\newtheorem*{definition}{Definition}

\usepackage{amsmath}
\usepackage{amssymb}
\usepackage{dsfont} 
\usepackage{mathtools}
\mathtoolsset{showonlyrefs}

\usepackage{MnSymbol} 
\usepackage{calc} 

\newlength{\arrow}
\settowidth{\arrow}{\scriptsize$.....$} 

\usepackage{xargs}



\usepackage{tabularx}
\usepackage{graphicx}
\usepackage{caption}
\captionsetup{width=0.85\textwidth}

\begin{document}
\title{Locality of percolation for graphs with  polynomial growth}
\date{\today} 

\author{Daniel Contreras\thanks{ETH Zürich, Rämistrasse 101, 8092 Zurich Switzerland} 
\and
Sébastien Martineau\thanks{LPSM, Sorbonne Université, 4 place Jussieu, 75005 Paris France}
\and
Vincent Tassion\footnotemark[1]} 

\newcommand{\defini}{\textbf}


\maketitle

\begin{abstract}
Schramm's Locality Conjecture asserts that the value of the critical percolation parameter~$p_c$ of a graph satisfying $p_c<1$ depends only on its local structure.
In this note, we prove this conjecture in the particular case of transitive graphs with polynomial growth.
Our proof relies on two recent works about such graphs, namely supercritical sharpness of percolation by the same authors and a finitary structure theorem by Tessera and Tointon.
\end{abstract}

\section{Introduction}

Around 2008, Schramm conjectured that, under some non-degeneracy assumption, the value of the critical probability for percolation depends only on the local structure of the underlying graph.  This means that two transitive graphs with similar local structure should have close critical probabilities.

 Let us recall a  formal version of this conjecture. In this paper, graphs are taken to be simple, non-empty, locally finite, and connected.
Given two transitive graphs $\mathcal{G}$ and $\mathcal{H}$, define
\begin{equation*}
	R(\mathcal{G},\mathcal{H}):=\max\{k\in\mathbb{N}\cup\{\infty\}:B_\mathcal{G}(k)\simeq B_\mathcal{H}(k)\}.
\end{equation*}
Let $(\mathcal{G}_k)$ be a sequence of transitive graphs and let $\mathcal{G}_\infty$ also be some transitive graph. Say that $(\mathcal{G}_k)$ \defini{converges to $\mathcal{G}_\infty$ (for the local topology)} if $R(\mathcal{G}_k,\mathcal{G}_\infty)$ converges to infinity. Mathematically, we write $\mathcal{G}_k \xrightarrow[k\to\infty]{}\mathcal{G}_\infty$.

Schramm's Locality Conjecture is the following statement. Let $(\mathcal{G}_k)$ be a sequence of transitive graphs such that $p_c(\mathcal{G}_k)<1$ for every $k$. Assume that $\mathcal{G}_k \xrightarrow[k\to\infty]{}\mathcal{G}_\infty$ for some transitive graph $\mathcal{G}_\infty$.
Then $p_c(\mathcal{G}_k) \xrightarrow[k\to\infty]{}p_c(\mathcal{G}_\infty)$.

This conjecture first appeared in~\cite{benjamini2011critical}, where  the authors tackled the case of a sequence of uniformly non-amenable graphs that converges locally to an infinite regular tree. Since then, the conjecture has been established for Cayley graphs of Abelian groups in~\cite{MR3630298} and for graphs with uniform exponential growth in  \cite{2018arXiv180808940H}.

Notice that the assumption with $p_c<1$ cannot be removed. Indeed, for the usual Cayley graph structures, both sequences $(\mathbb{Z}/k\mathbb{Z})^2$ and $\mathbb{Z}\times(\mathbb{Z}/k\mathbb{Z})$ converge to $\mathbb{Z}^2$. However, all graphs of these sequences satisfy $p_c=1$ while the square lattice has $p_c<1$.

The conjecture was originally stated under the more stringent condition $\sup_k p_c(\mathcal{G}_k) < 1$ but it results from \cite{hutchcroft2021non} that if $(\mathcal{G}_k)$ converges for the local topology and satisfies $p_c(\mathcal{G}_k)<1$ for every $k$, then it automatically satisfies the seemingly stronger condition $\sup_k p_c(\mathcal{G}_k) < 1$. Panagiotis and Severo have proved in \cite{panagiotisevero} that, for Cayley graphs, the previous sentence is correct even without assuming that the sequence $(\mathcal{G}_k)$ converges to some transitive graph. Their result has been made quantitative and explicit in \cite{lmtt}.

\paragraph{Locality for graphs of polynomial growth}

In this paper, we establish Schramm's Locality Conjecture under the assumption that $\mathcal{G}_\infty$ has polynomial growth, i.e.~that the cardinality the ball of radius $n$ is upper-bounded by a polynomial in $n$. This article can be read with $p_c$ meaning either always $p_c^\mathsf{site}$ or always $p_c^\mathsf{bond}$, both interpretations yield correct statements.

\begin{theorem}\label{thm}
Let $(\mathcal{G}_k)_{k\in \mathbb N}$ be a sequence of transitive graphs such that $p_c(\mathcal{G}_k)<1$ for every $k\in \mathbb N$.  Let  $\mathcal{G}_\infty$  be a transitive graph of polynomial growth. If  $\mathcal{G}_k \xrightarrow[k\to\infty]{}\mathcal{G}_\infty$, then 
\begin{equation}
p_c(\mathcal{G}_k) \xrightarrow[k\to\infty]{}p_c(\mathcal{G}_\infty).\label{eq:2}
\end{equation}
\end{theorem}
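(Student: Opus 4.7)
My plan is to prove the two inequalities $\limsup_k p_c(\mathcal{G}_k) \leq p_c(\mathcal{G}_\infty)$ and $\liminf_k p_c(\mathcal{G}_k) \geq p_c(\mathcal{G}_\infty)$ by the same three-step scheme. The three ingredients are: (i) a one-scale sharpness bound on $\mathcal{G}_\infty$; (ii) transfer of that bound to $\mathcal{G}_k$ for $k$ large via local convergence; and (iii) a finite-size criterion upgrading the one-scale bound on $\mathcal{G}_k$ into a statement about $p_c(\mathcal{G}_k)$.

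For step (i), I would fix $p$ either strictly below or strictly above $p_c(\mathcal{G}_\infty)$. In the first case, exponential decay of the one-arm probability $\mathbb{P}_p(o \leftrightarrow \partial B_{\mathcal{G}_\infty}(n))$ below $p_c$ makes this probability arbitrarily small for $n$ large. In the second case, the companion paper's supercritical sharpness for polynomial growth drives the same probability arbitrarily close to $1$. Step (ii) is essentially free: the event $\{o \leftrightarrow \partial B(n)\}$ depends only on edges inside $B(n)$, and $B_{\mathcal{G}_k}(n) \simeq B_{\mathcal{G}_\infty}(n)$ as soon as $R(\mathcal{G}_k,\mathcal{G}_\infty) > n$, so this probability is literally the same in $\mathcal{G}_k$ and in $\mathcal{G}_\infty$ for $k$ large enough.

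The main obstacle lies in step (iii). For a general transitive graph, knowing the one-arm probability at a single scale says essentially nothing about $p_c$. The companion paper, however, establishes exactly such a finite-size criterion, in both the sub- and supercritical regimes, \emph{under the assumption that the underlying graph has polynomial growth}. The difficulty is that this criterion must be applied to $\mathcal{G}_k$, about which a priori only the ball $B_{\mathcal{G}_k}(n) \simeq B_{\mathcal{G}_\infty}(n)$ is known; local convergence gives a polynomial growth bound for $\mathcal{G}_k$ at scale $n$, but not beyond. This is where the finitary structure theorem of Tessera and Tointon is indispensable: a single polynomial growth bound at a sufficiently large scale forces polynomial growth at \emph{all} scales, with constants depending only on the local profile. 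This yields uniform polynomial growth along the sequence $(\mathcal{G}_k)$, feeds into the finite-size criteria of step (iii), and closes the argument in both directions.
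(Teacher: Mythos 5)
Your high-level outline---supercritical sharpness on $\mathcal{G}_\infty$, transfer by local convergence, a finite-size criterion on $\mathcal{G}_k$, with Tessera--Tointon controlling geometry uniformly---is in the right spirit, and you correctly identify that only the supercritical direction needs new ideas (the subcritical/liminf side follows from the Duminil-Copin--Tassion finite-size criterion $\varphi_p(B_n)<1$, which holds for all transitive graphs without any growth hypothesis). However, your step (iii) is the real content of the theorem and you treat it as a black box that does not exist in the literature.

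First, the companion paper does not establish a finite-size criterion for supercriticality. It proves supercritical \emph{sharpness}: for a fixed graph of polynomial growth and $p>p_c$, the events $\mathsf E_n$ (existence plus local uniqueness in an annulus) satisfy $\mathbb P_p(\mathsf E_n)\to 1$. This is the direction ``$\theta(p)>0\Rightarrow$ good local picture.'' The converse---that $\mathbb P_p(\mathsf E_n)$ close to $1$ at a \emph{single} scale forces percolation---must be proved by a renormalisation, and that is the content of Section~\ref{sec:proof-theorem-refthm} of the present paper, not of the companion.

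More seriously, uniform polynomial growth along $(\mathcal G_k)$ is \emph{not} the geometric input that makes the renormalisation work uniformly in $k$. The renormalised process is a $k$-independent site process on a net of $\mathcal G_k$, and uniform polynomial growth only bounds the degree of that net. A bounded-degree graph can have $p_c^{\mathrm{site}}=1$ (think of a line), in which case a dominated Bernoulli process on it never percolates and the renormalisation proves nothing. What the argument actually needs is an $(a,Ca)$-net with $p_c^{\mathrm{site}}\le 3/4$, with $C$ independent of $k$ and of the scale $a$; this is Proposition~\ref{prop} (``controlled nets''), the genuine key lemma. Proving it uses the Tessera--Tointon theorem in a much stronger way than ``growth at one scale controls growth at all scales'': one passes to nilpotent Cayley graphs quasi-isometric to $\mathcal G_k$ with uniform constants, uses that an infinite one-ended nilpotent group surjects onto $\mathbb Z^2$, builds $\mathbb Z^2$-like nets there, and pulls them back via a Benjamini--Schramm-type monotonicity of $p_c^{\mathrm{site}}$ under covering-like maps (Proposition~\ref{prop:mono}). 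None of this is forced by a volume bound alone; the nilpotent \emph{algebraic} structure is what produces the $\mathbb Z^2$ backbone, and this is the idea your proposal is missing.
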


\begin{remark} Only the graph $\mathcal G_\infty$ is assumed to have polynomial growth. Actually, it follows from \cite{tesseratointon} that if $\mathcal{G}_\infty$ has polynomial growth and $\mathcal G_k\xrightarrow[k\to\infty]{}\mathcal G_\infty$, then $\mathcal G_k$ also has polynomial growth for all $k$ large enough.
\end{remark}

Since the inequality $\limsup p_c(\mathcal{G}_k)\le p_c(\mathcal{G}_\infty)$ is known in full generality (see~\cite[Section 14.2]{pete2014probability} or \cite[Section 1.2]{duminil2016newproof}), we only need to take $p> p_c(\mathcal{G}_\infty)$ and prove that $p\ge p_c(\mathcal{G}_k)$ for $k$ large enough.
To do so, we use supercritical sharpness on the limit graph $\mathcal{G}_\infty$ to build a finite-size event that has good probability to occur and that, when occurring, guarantees good connections within some box. This first step relies on \cite{supercriticalCMT2021}. Then, we use finitary structure theorems for the geometry of transitive graphs of polynomial growth in order to perform a renormalisation argument. This enables us to go from local to global: from the fact that our finite-size event holds with good probability, we obtain that there is an infinite cluster in $\mathcal{G}_k$ at parameter $p$. This second step relies on \cite{tesseratointon} and ``finitary'' means that global geometric information can be deduced from suitable information inside a \emph{single} ball.

\paragraph{Uniformly controlled nets}

A key point in the proof is that the aforementioned renormalisation on $\mathcal G_k$ can be performed uniformly with respect to $k$: we need the scale of renormalisation not to depend on the graph $\mathcal G_k$.      
To achieve this, we use Proposition~\ref{prop} below and   rely on the standard notion of net, which we now define.

Given two numbers $a,b\ge 1$, an  $(a,b)$-net of a graph $(V,E)$ is a subset that is $a$-separated and $b$-dense. Namely, it is some $V_0 \subset V$ such that $$\min \{d(x,y) : x,y\in V_0, x\neq y\}\geq a~~~~~\text{and}~~~~~\max\{d(x,y) : x\in V_0, y \in V\}\leq b.$$ An $(a,b)$-net has a natural graph structure:  two distinct elements $x$ and $y$ of $V_0$ are said to  be adjacent if $d(x,y)\leq 4b$. This graph structure depends on $V_0$ but also on the particular $b$ we have in mind when considering $V_0$ as an $(a,b)$-net. When some $V_0$ is considered as an $(a,b)$-net, we define $p_c^{\mathrm{site}}(V_0)$ as the critical parameter for Bernoulli site percolation on $V_0$, equipped with this graph structure.

\begin{definition}
  \label{def:1}
  Let $\mathbb G$ be a collection of transitive graphs and let $C\ge 1$. We say that $\mathbb G$  has \defini{$C$-controlled nets} if  for every $a\ge 1$,  every graph   $\mathcal{G}\in \mathbb G$  admits an $(a,Ca)$-net $V_0$ with $p_c^{\mathrm{site}}(V_0)\le\tfrac{3}{4}$.
\end{definition}

In the definition above, the value $3/4$ does not play a specific role. Any fixed value $\alpha<1$  would work equally well for our purpose. The value  $3/4$ appears in Lemma~\ref{lem:zdeux} as an upper bound for the critical parameter of Bernoulli site percolation on $\mathbb Z^2$.

  \begin{proposition}\label{prop}
Let $(\mathcal{G}_k)$ be a sequence of transitive graphs such that $\forall k\in\mathbb{N},~p_c(\mathcal{G}_k)<1$. If  $\mathcal{G}_k \xrightarrow[k\to\infty]{}\mathcal{G}_\infty$ for some transitive graph $\mathcal{G}_\infty$ of polynomial growth, then there is some constant  $C\ge 1$ such that the collection $\{\mathcal{G}_k:k\ge C\}$  has $C$-controlled nets. 
\end{proposition}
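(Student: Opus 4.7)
The plan is to invoke the finitary structure theorem of Tessera--Tointon to replace each $\mathcal{G}_k$ (for $k$ large) by a uniformly quasi-isometric Cayley graph of a virtually nilpotent group $\Gamma$ of polynomial growth degree at least $2$, to exhibit in $\Gamma$ an appropriately weighted lattice that is $\Theta(a)$-separated and whose $\mathbb{Z}^2$-neighbours stay $O(a)$-close, and then to pull this lattice back to $\mathcal{G}_k$ as the backbone of an $(a,Ca)$-net. The conclusion will then follow from Lemma~\ref{lem:zdeux} combined with monotonicity of site percolation under subgraph inclusion.

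First I would establish uniform polynomial growth: a degree $d\geq 2$ and a constant $M$ such that $|B_{\mathcal{G}_k}(n)|\leq Mn^d$ for every $n\geq 1$ and every $k\geq k_0$. Uniform polynomial growth of some degree follows from local convergence combined with the finitary characterisation of polynomial growth in \cite{tesseratointon}, which upgrades a polynomial bound checked at a single (well-chosen, large) scale into a uniform polynomial bound at every scale. The lower bound $d\geq 2$ comes from the hypothesis $p_c(\mathcal{G}_k)<1$, since a transitive graph of polynomial growth degree at most $1$ is virtually $\mathbb{Z}$ and satisfies $p_c=1$.

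Next, still by Tessera--Tointon, each $\mathcal{G}_k$ with $k\geq k_0$ is quasi-isometric, with constants depending only on $(d,M)$, to a Cayley graph of some virtually nilpotent group $\Gamma$. Because $\Gamma$ has growth degree $d\geq 2$, a standard iteration along the lower central series produces two commuting elements $g,h\in\Gamma$ generating a (possibly distorted) $\mathbb{Z}^2$ subgroup, with nilpotent weights $w_g,w_h\leq d$ in the Malcev coordinates of $\Gamma$. For each $a\geq 1$, consider $W_0:=\{g^{a^{w_g}m}h^{a^{w_h}n}:(m,n)\in\mathbb{Z}^2\}\subset\Gamma$: a direct word-length computation (using that $g,h$ commute, so the commutator corrections cancel) shows that $W_0$ is $\Theta(a)$-separated in $\Gamma$, and that lattice neighbours lie within $O(a)$ of one another. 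Transporting $W_0$ through the quasi-isometry to a subset of $\mathcal{G}_k$, then extending it greedily to a maximal $a$-separated set $V_0$, yields an $(a,Ca)$-net of $\mathcal{G}_k$; for $C$ exceeding a universal multiple of the quasi-isometry constants and of the implicit $O(a)$-constants, the lattice edges of $W_0$ remain within the adjacency radius $4Ca$, so that $V_0$ contains $\mathbb{Z}^2$ as a subgraph in its net-graph structure.

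By Lemma~\ref{lem:zdeux} and monotonicity of site percolation, $p_c^{\mathrm{site}}(V_0)\leq p_c^{\mathrm{site}}(\mathbb{Z}^2)\leq 3/4$. Taking $C$ to be the maximum of all thresholds encountered along the way (and in particular $C\geq k_0$) yields the proposition. The main difficulty will be ensuring that this single constant $C$ can be chosen uniformly in both $k$ and the scale $a$: it must simultaneously govern the scale of validity of the Tessera--Tointon approximation, the quasi-isometry constants, the Malcev weights of $(g,h)$, and the adjacency radius $4Ca$. This uniformity is exactly the ``finitary'' feature of Tessera--Tointon highlighted in the introduction, and exploiting it carefully is the technical heart of the argument.
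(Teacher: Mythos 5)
Your overall architecture (Tessera--Tointon for a uniform quasi-isometry to a nilpotent Cayley graph, a planted copy of $\mathbb{Z}^2$ to force $p_c^{\mathrm{site}}\le 3/4$, then transport through the quasi-isometry) matches the paper's, but at the key step you diverge: you plant a possibly distorted $\mathbb{Z}^2$ \emph{subgroup} with weight-adjusted spacings $g^{a^{w_g}m}h^{a^{w_h}n}$ and invoke subgraph monotonicity, whereas the paper works with a $\mathbb{Z}^2$ \emph{quotient} of the nilpotent group and uses a covering-type monotonicity statement (Proposition~\ref{prop:mono}). As written, your route has a genuine gap exactly where you flag ``the technical heart'': the claim that $W_0$ is $\Theta(a)$-separated with lattice neighbours $O(a)$-close via ``a direct word-length computation'' is not a computation at all. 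Lower bounds on $\|g^{a^{w_g}m}h^{a^{w_h}n}\|_S$ in a nilpotent group are genuinely delicate (they are the content of the Bass--Guivarc'h distortion estimates), and --- more importantly --- the implicit constants depend on the group $G_k$, the generating set $S_k$, and the chosen pair $(g,h)$, none of which are controlled by the Tessera--Tointon theorem, whose uniformity concerns the quasi-isometry constant and coarse structural data, not the distortion of a subgroup you select afterwards. Since the entire point of the proposition is a single constant $C$ valid for all $k$ and all scales $a$, asserting this uniformity without a mechanism leaves the proof incomplete. (A secondary, fixable point: ``iterating along the lower central series'' does not obviously produce two \emph{commuting} independent elements of infinite order; one should instead use, e.g., an infinite-order central element together with an independent one, or pass to the abelianisation.)

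The paper's quotient-based route is designed precisely to kill this uniformity problem. A surjection $\pi:G\to\mathbb{Z}^2$ is $1$-Lipschitz for the word metrics of $S$ and $\pi(S)$, so $a$-separation in $\mathsf{Cay}(\mathbb{Z}^2,\pi(S))$ lifts to $a$-separation in $\mathsf{Cay}(G,S)$ with no distortion analysis at all; and Lemma~\ref{lem:1} gives completely explicit, universal constants (the factor $3$) valid for \emph{every} generating set of $\mathbb{Z}^2$, which is what makes Lemma~\ref{lem:zdeux} and then Lemma~\ref{lem:2} uniform. The price is that the resulting net upstairs does not contain $\mathbb{Z}^2$ as a subgraph --- it only fibres over a good net of $\mathbb{Z}^2$ --- so subgraph monotonicity is unavailable and one needs the lifting-of-explorations argument of Proposition~\ref{prop:mono} instead. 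If you want to salvage your subgroup approach, you would need a quantitative version of the distortion estimates with constants depending only on the data that Tessera--Tointon does bound uniformly; this is plausible but is a substantial piece of work that your proposal does not supply.
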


\subsection*{Organisation of the paper}

The proof of Proposition~\ref{prop} is presented in Section~\ref{sec:proof-proposition}.  In Section~\ref{sec:proof-theorem-refthm}, we deduce Theorem~\ref{thm} from Proposition~\ref{prop} by using a standard finite-size criterion approach. Our proof of Proposition~\ref{prop} uses a generalisation of the monotonicity result of Benjamini and Schramm \cite[Theorem 1.1]{MR1423907}, which states that the critical parameter  $p_c$ of   a
graph is always greater than or equal to the critical parameter of any of its covering graphs. For completeness, we present this generalised statement in Section~\ref{sec:monot-p_c-revis}.

\subsection*{Acknowledgments}
\label{sec:acknoledgment}

We thank Romain Tessera and Matthew Tointon for helpful comments regarding nilpotent geometry.

The first and third authors are supported by the European Research Council (ERC) under the European Union’s Horizon 2020 research and innovation program (grant agreement No 851565) and by the NCCR SwissMap.

\section{Uniformly controlled nets for converging sequences}
\label{sec:proof-proposition}

We want to prove that if $(\mathcal{G}_k)$ is a sequence as in Proposition~\ref{prop}, then, for some $C$, the collection $\{\mathcal{G}_k:k\ge C\}$  has $C$-controlled nets. In order to prove this statement, we first prove that the collection of all Cayley graphs of $\mathbb{Z}^2$   has $C$-controlled nets. Then, we extend this result to all Cayley graphs of nilpotent groups satisfying $p_c<1$. Finally, by using \cite{tesseratointon}, we obtain the desired statement.

\subsection{Cayley graphs of $\mathbb{Z}^2$  have  controlled nets}

Given a finite generating subset $S$ of $\mathbb{Z}^2$ and some $v\in\mathbb{Z}^2$, we write $\|v\|_S$ for the distance between 0 and $v$ in the Cayley graph $\mathsf{Cay}(\mathbb{Z}^2,S)$.

On an Abelian Cayley graph, distances can be well analysed by elementary linear algebra. Here, we use the following lemma, which provides a useful control on the distances in Cayley graphs of $\mathbb Z^2$.

\begin{lemma}
\label{lem:1}
For every finite generating subset $S$ of $\mathbb{Z}^2$, there are $u,v\in S$ such that
$$
\forall m,n\in \mathbb{Z},\qquad \tfrac{|m|+|n|}{3}\leq \|mu+nv\|_S\leq |m|+|n|.
$$
\end{lemma}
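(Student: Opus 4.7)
The plan is to choose $u,v\in S$ to form a ``good basis'' of the sublattice they generate, in the sense that every element of $S$ has small coefficients in this basis. The natural candidate is a pair maximising $|\det(u,v)|$: since $S$ generates $\mathbb{Z}^2$ it must contain two linearly independent vectors, and the finiteness of $S$ guarantees such a maximising pair exists, with $|\det(u,v)|\ge 1$. With this choice fixed, the upper bound in the statement is immediate from the triangle inequality and the fact that $\|u\|_S=\|v\|_S=1$ (using that $S$ is symmetric, as we are working with undirected Cayley graphs).

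For the lower bound, I would write a geodesic word for $w:=mu+nv$, say $w=s_1+\cdots+s_N$ with $s_i\in S$ and $N=\|w\|_S$, and expand each $s_i=\alpha_i u+\beta_i v$ in the $\mathbb{Q}$-basis $(u,v)$ of $\mathbb{R}^2$. Matching coefficients gives $m=\sum \alpha_i$ and $n=\sum\beta_i$, whence
\[
|m|+|n|\;\le\;\sum_{i=1}^N\bigl(|\alpha_i|+|\beta_i|\bigr).
\]
The key observation is that the maximality of $|\det(u,v)|$ forces $|\alpha_i|,|\beta_i|\le 1$. Indeed, $\det(s_i,v)=\alpha_i\det(u,v)$, so $|\alpha_i|=|\det(s_i,v)|/|\det(u,v)|\le 1$ by maximality, and symmetrically for $|\beta_i|$. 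Summing gives $|m|+|n|\le 2N\le 3\|mu+nv\|_S$.

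I do not anticipate any real obstacle here: the whole argument is a short exercise in planar linear algebra. The one small subtlety is that $u,v$ need not form a basis of $\mathbb{Z}^2$, only of $\mathbb{Q}^2$, so the coefficients $\alpha_i,\beta_i$ live a priori in $\mathbb{Q}$; this is harmless since we use them only through the identities $m=\sum\alpha_i$, $n=\sum\beta_i$. The bound obtained is actually the sharper $|m|+|n|\le 2\|mu+nv\|_S$, but the factor $1/3$ requested by the lemma clearly leaves room to spare, and any constant less than $1$ could replace it at no extra cost.
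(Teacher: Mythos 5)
Your proof is correct, and it takes a genuinely different route from the paper's. The paper picks $u\in S$ of maximal Euclidean norm and then $v\in S$ maximising the norm of its orthogonal projection onto $u^{\perp}$; these two extremality properties yield the two separate bounds $\|mu+nv\|_S\ge |m|-|n|$ and $\|mu+nv\|_S\ge |n|$, which combine to give the constant $\tfrac13$ via $\max(|m|-|n|,|n|)\ge\tfrac{|m|+|n|}{3}$. You instead pick the pair $(u,v)\in S^2$ maximising $|\det(u,v)|$ and use Cramer's rule to deduce the stronger structural fact that \emph{every} generator $s\in S$ has coordinates in $[-1,1]$ in the basis $(u,v)$; summing along a geodesic then gives $\|mu+nv\|_S\ge\tfrac{|m|+|n|}{2}$, a sharper constant (which, as you note, is more than the lemma needs --- any fixed constant would do for the application). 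Your argument is also the one that scales most transparently to $\mathbb{Z}^d$ (a $d$-tuple of maximal determinant gives the constant $\tfrac1d$), whereas the projection argument as written is specific to the plane. All the small points you flag are indeed harmless: the $\alpha_i,\beta_i$ are only used through the identities $m=\sum\alpha_i$, $n=\sum\beta_i$, which follow from uniqueness of coordinates in the $\mathbb{R}^2$-basis $(u,v)$; and the geodesic steps $s_i$ a priori lie in $S\cup(-S)$, but $|\det(\pm s,v)|=|\det(s,v)|$, so the maximality bound still applies.
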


\begin{proof}
  See $\mathbb{Z}^2$ as a subset of $\mathbb{R}^2$ endowed with the usual Euclidean norm $\|\cdot\|_2$. Pick $u$ an element of $S$ with maximal Euclidean norm. Denote by $p$ the linear orthogonal projection on $u^\perp$. Pick an element $v$ of $S$ that maximises $\|p(v)\|_2$.

Let $m,n\in \mathbb{Z}$. By the triangle inequality in $\mathsf{Cay}(\mathbb{Z}^2,S)$, we have $$\|mu+nv\|_S\leq |m|\cdot \|u\|_S+|n|\cdot\|v\|_S=|m|+|n|,$$ which would hold for any choice of $u$ and $v$ in $S$.
As $u$ maximises its Euclidean norm inside $S$, by the triangle inequality in $(\mathbb{R}^2,\|\cdot\|_2)$, we have
\[(|m|-|n|)\|u\|_2\le|m|\cdot\|u\|_2-|n|\cdot\|v\|_2\le\|mu+nv\|_2\leq \|mu+nv\|_S \|u\|_2,\]
whence $\|mu+nv\|_S  \ge |m|-|n|$. On the other hand, as $v$ maximises $\|p(v)\|_2$ in $S$, the triangle inequality in $(\mathbb{R}^2,\|\cdot\|_2)$ yields
\[|n|\cdot\|p(v)\|_2=\|p(nv)\|_2=\|p(mu+nv)\|_2\le \|mu+nv\|_S\cdot\|p(v)\|_2,\]
 whence $\|mu+nv\|_S\ge |n|$. We conclude by observing that $\max(|m|-|n|,|n|)\ge\tfrac{|m|+|n|}{3}$.
\end{proof}

\begin{lemma}
\label{lem:zdeux}
The collection of all Cayley graphs of $\mathbb{Z}^2$  has $1$-uniformly controlled nets.
\end{lemma}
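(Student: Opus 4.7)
The plan is to build $V_0$ from the sublattice selected by Lemma~\ref{lem:1} and then fatten it to gain density. Given a finite generating subset $S$ of $\mathbb Z^2$ and $a\ge 1$, I would pick $u,v\in S$ as provided by Lemma~\ref{lem:1}, set $k:=\lceil 3a\rceil$, and consider
\[
V_0^{(0)}\;:=\;\{kmu+knv:m,n\in\mathbb Z\}.
\]
The lower bound of Lemma~\ref{lem:1} immediately shows that $V_0^{(0)}$ is $a$-separated: for $(m,n)\ne(0,0)$ we have $\|kmu+knv\|_S\ge \tfrac{k(|m|+|n|)}{3}\ge \tfrac{k}{3}\ge a$. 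I would then extend $V_0^{(0)}$ greedily to a maximal $a$-separated subset $V_0\subset \mathbb Z^2$ containing $V_0^{(0)}$ (this is possible since $\mathbb Z^2$ is countable). Maximality forces $d_S(w,V_0)<a$ for every $w\in\mathbb Z^2\setminus V_0$, since otherwise $V_0\cup\{w\}$ would still be $a$-separated; thus $V_0$ is $a$-dense, hence an $(a,a)$-net.

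The remaining task is to bound $p_c^{\mathrm{site}}(V_0)$, where $V_0$ carries its natural graph structure ($x\sim y$ iff $d_S(x,y)\le 4a$). My plan is to embed the standard nearest-neighbor graph of $\mathbb Z^2$ into the induced subgraph of $V_0$ on $V_0^{(0)}$, via $\phi(m,n):=kmu+knv$. Whenever $|m-m'|+|n-n'|=1$, the upper bound of Lemma~\ref{lem:1} yields $d_S(\phi(m,n),\phi(m',n'))\le k$, and the choice $k=\lceil 3a\rceil$ together with $a\ge 1$ guarantees $k\le 3a+1\le 4a$, so $\phi(m,n)$ and $\phi(m',n')$ are indeed adjacent in $V_0$. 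Restricting a Bernoulli site-percolation configuration on $V_0$ to $V_0^{(0)}$ then gives a Bernoulli site percolation at the same parameter on a subgraph that contains a copy of the nearest-neighbor graph of $\mathbb Z^2$, so any infinite open cluster on $\mathbb Z^2$ lifts to an infinite open cluster on $V_0$. Combined with the classical bound $p_c^{\mathrm{site}}(\mathbb Z^2)\le \tfrac34$, this yields $p_c^{\mathrm{site}}(V_0)\le \tfrac34$.

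The only genuine obstacle is the density condition. The raw sublattice $V_0^{(0)}$ need not be $a$-dense for the $S$-metric, because the index $[\mathbb Z^2:\mathbb Z u+\mathbb Z v]$ and the $S$-lengths of coset representatives of $\mathbb Z^2/(\mathbb Z u+\mathbb Z v)$ cannot be controlled uniformly in $S$, and the fundamental domains of $k\mathbb Z u+k\mathbb Z v$ inside $\mathbb Z u+\mathbb Z v$ already have $S$-diameter of order $k\sim 3a$. The greedy extension step handles this cleanly and, crucially, is costless for the percolation bound: enlarging $V_0$ beyond $V_0^{(0)}$ can only add edges within $V_0^{(0)}$, so the embedded nearest-neighbor $\mathbb Z^2$ subgraph persists in the induced graph.
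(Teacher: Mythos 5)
Your proof is correct and follows essentially the same route as the paper: pick $u,v$ via Lemma~\ref{lem:1}, form the sublattice generated by $\lceil 3a\rceil u$ and $\lceil 3a\rceil v$ (which is $a$-separated by the lower bound of Lemma~\ref{lem:1}), greedily extend to a maximal $a$-separated set to get an $(a,a)$-net, and observe that the sublattice yields an embedded copy of the nearest-neighbour $\mathbb{Z}^2$ inside the net graph, giving $p_c^\mathrm{site}\le\tfrac34$. The only difference is that you spell out the verification $\lceil 3a\rceil\le 4a$ and the persistence of the embedded subgraph after the greedy extension, both of which the paper leaves implicit.
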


\begin{proof}
Let $S$ be a finite generating subset of $\mathbb{Z}^2$. Let $u$ and $v$ be such that the conclusion of Lemma~\ref{lem:1} holds. Let $a\ge 1$ and $m=\lceil 3a \rceil$. Let $\Gamma$ denote the subgroup of $\mathbb{Z}^2$ generated by $mu$ and $mv$. By the choice of $u$ and $v$, this defines an $a$-separated subset of $\mathsf{Cay}(\mathbb{Z}^2,S)$. Let $V_0$ be a maximal $a$-separated subset of $\mathsf{Cay}(\mathbb{Z}^2,S)$ containing $\Gamma$, maximality being understood relative to inclusion. By maximality, $V_0$ is an $(a,a)$-net. Furthermore, via the embedding $(k,\ell)\mapsto kmu+\ell mv$, this net contains the square lattice as a subgraph. It therefore satisfies $p_c^\mathrm{site}\le \tfrac{3}{4}$.
\end{proof}

\subsection{Nilpotent groups  have  controlled nets}

Lemma~\ref{lem:zdeux} gives us some uniform control over all Cayley graphs of $\mathbb{Z}^2$. Combined with the observation that any nilpotent group with $p_c<1$ admits $\mathbb{Z}^2$ as a quotient, this allows us to get the following statement.

\begin{lemma}
\label{lem:2}
The collection of all Cayley graphs of nilpotent groups with $p_c<1$ has  $2$-controlled nets.
\end{lemma}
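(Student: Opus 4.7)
The plan has three parts: an algebraic reduction, the construction of a lifted net, and a transfer of $p_c$ via a generalised cover-monotonicity result.

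\textbf{Step 1 (Algebraic reduction).} First I would establish the observation announced in the preamble: every finitely generated nilpotent group $G$ with $p_c(\mathsf{Cay}(G,T))<1$ admits a surjective homomorphism $\phi\colon G\twoheadrightarrow\mathbb{Z}^2$. The abelianisation $G^{\mathrm{ab}}:=G/[G,G]$ is finitely generated abelian; if its $\mathbb{Z}$-rank were at most $1$, an induction along the lower central series (using that the alternating bilinear commutator form vanishes modulo torsion on a rank-$1$ module), combined with Bass's growth formula, would force $G$ to have polynomial growth of degree $\le 1$, hence to be finite or virtually $\mathbb{Z}$. Both cases give $p_c=1$, a contradiction; so the rank is at least $2$ and $\phi$ exists.

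\textbf{Step 2 (Lifting the net).} Set $S:=\phi(T)\subset \mathbb{Z}^2$ and fix $a\ge 1$. Lemma~\ref{lem:zdeux} applied to $\mathsf{Cay}(\mathbb{Z}^2,S)$ produces an $(a,a)$-net $V_0'\subset\mathbb{Z}^2$ with $p_c^{\mathrm{site}}(V_0')\le 3/4$. Pick any set-theoretic section $\sigma\colon V_0'\to G$ of $\phi$; since $\phi$ is $1$-Lipschitz as a map between Cayley graphs, the image $\sigma(V_0')$ is automatically $a$-separated in $\mathsf{Cay}(G,T)$. Extend $\sigma(V_0')$ to a maximal $a$-separated subset $V_0$ of $\phi^{-1}(V_0')$. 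Any $g\in G$ satisfies $d_{\mathbb{Z}^2}(\phi(g),V_0')\le a$; lifting a $\mathbb{Z}^2$-geodesic edge-by-edge to $\mathsf{Cay}(G,T)$ (feasible since $S=\phi(T)$) yields a preimage of that nearby $V_0'$-point within $G$-distance $a$ of $g$, and maximality then places a point of $V_0$ within distance $<a$ of it. Hence $V_0$ is an $(a,2a)$-net of $G$.

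\textbf{Step 3 (Transfer of $p_c$).} The key point is that the restricted projection $\pi:=\phi|_{V_0}\colon V_0\to V_0'$ enjoys a path-lifting property compatible with the graph structure of the nets. Indeed, for any edge $x\sim y$ in $V_0'$ (so $d_{\mathbb{Z}^2}(x,y)\le 4a$) and any $\tilde x\in V_0$ with $\phi(\tilde x)=x$, lifting a $\mathbb{Z}^2$-geodesic from $x$ to $y$ yields a preimage $g$ of $y$ with $d_G(\tilde x,g)\le 4a$; maximality of $V_0$ gives some $\tilde y\in V_0$ at distance $<a$ from $g$, and the $a$-separation of $V_0'$ forces $\phi(\tilde y)=y$. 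Thus $d_G(\tilde x,\tilde y)<5a\le 8a$, so $\tilde y$ is adjacent to $\tilde x$ in $V_0$ viewed as an $(a,2a)$-net. The generalised Benjamini--Schramm monotonicity from Section~\ref{sec:monot-p_c-revis} then gives $p_c^{\mathrm{site}}(V_0)\le p_c^{\mathrm{site}}(V_0')\le 3/4$, which finishes the proof.

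\textbf{Main obstacle.} Step~1 is standard but relies on a careful use of the lower central series and the torsion contributions to Bass's formula. The crux lies in Step~3: the map $\pi$ is \emph{not} a true graph covering (several preimages in $V_0$ of a given neighbour in $V_0'$ may coexist), so the classical Benjamini--Schramm theorem is insufficient and one must invoke the generalisation of Section~\ref{sec:monot-p_c-revis}. The factor $2$ in ``$2$-controlled'' appears naturally as the slack generated when extending the section $\sigma(V_0')$ to a maximal $a$-separated subset of $\phi^{-1}(V_0')$.
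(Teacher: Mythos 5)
Your proposal is correct and follows essentially the same strategy as the paper: use a surjection $G\twoheadrightarrow\mathbb{Z}^2$, push the generating set forward, invoke Lemma~\ref{lem:zdeux} on $\mathbb{Z}^2$, lift the resulting net, and transfer $p_c^{\mathrm{site}}$ upward via Proposition~\ref{prop:mono}. The only differences are cosmetic: for the existence of the surjection you sketch a direct nilpotent-group argument (the paper simply cites \cite[Lemma~3.23]{hutchcroft2021non}), and you build $V_0$ as a single maximal $a$-separated subset of the whole preimage $\phi^{-1}(V_0')$ whereas the paper takes a maximal $a$-separated subset inside each fibre $\pi^{-1}(\{x\})$ and unites them; both constructions yield the needed $(a,2a)$-net and the required path-lifting property, so the difference has no mathematical content.
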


\begin{remark}
If the Cayley graph $\mathcal{G}=\mathsf{Cay}(G,S)$  under consideration was the product of a Cayley graph of $\mathbb{Z}^2$ and another Cayley graph $\mathcal{H}$, it would suffice for our purpose to take a good $(a,a)$-net $V_1$ of $\mathbb{Z}^2$, an arbitrary $(a,a)$-net $V_2$ of $\mathcal{H}$, and to prove that $V_0=V_1\times V_2$ is a suitable net because of the choice of $V_1$. Here, $\mathcal{G}$ does not necessarily split as a product but it will still be possible to produce a suitable net, by using the fact that $\mathbb{Z}^2$ is a quotient of $G$. 
\end{remark}

\begin{proof}
Let $\mathcal{G}=\mathsf{Cay}(G,S)$ be a Cayley graph where the group $G$ is nilpotent and assume that $p_c(\mathcal{G})<1$. It is well-known that it is possible to fix a surjective group homomorphism $\pi$ from $G$ to $\mathbb{Z}^2$ (see e.g. \cite[Lemma 3.23]{hutchcroft2021non}).
Let $a\ge 1$. By Lemma~\ref{lem:zdeux}, we can pick $V_1$ an $(a,a)$-net of $\mathsf{Cay}(\mathbb{Z}^2,\pi(S))$ with $p_c^\mathrm{site}<\tfrac{3}{4}$. For each $x\in V_1$, pick a maximal $a$-separated\footnote{Given two distinct points in the set, any path connecting them has length at least $a$. The paths are allowed to exit the fibre $\pi^{-1}(\{x\})$.} subset of $\pi^{-1}(\{x\})$ and denote it by $U_x$. Let $V_0:=\bigcup_{x\in V_1} U_x$. We shall prove that $V_0$ is an $(a,5a)$-net of $\mathcal{G}$ with $p_c^\mathrm{site}\le\tfrac{3}{4}$.

First, observe that the set $V_0$ is $a$-separated in $\mathcal{G}=\mathsf{Cay}(G,S)$. Indeed, let $g$ and $h$ be two distinct points in $V_0$. We have $d_{S}(g,h)\ge d_{\pi(S)}(\pi(g),\pi(h))$. Thus, if $\pi(g)\neq \pi(h)$, we are done by definition of $V_1$. Otherwise, $g$ and $h$ belong to $\pi^{-1}(\{x\})$ for $x=\pi(g)=\pi(h)$, and then it holds by definition of $U_x$.

Let us now prove that $V_0$ is $2a$-dense in $\mathsf{Cay}(G,S)$. 
Let $g\in G$ and $x=\pi(g)$. Since $\pi$ is a quotient map and $V_1$ is $a$-dense in $\mathsf{Cay}(\mathbb{Z}^2,\pi(S))$, we can pick some $h\in \pi^{-1}(\{x\})$ such that $d(g,h)\le a$.
Since $U_x$ is maximal as an $a$-separated set, we have $d_S(h,U_x)<a$. As a result, $d(g,U_x)<2a$.

Finally, we show that $V_0$, considered with its graph structure of $(a,2a)$-net, has $p_c^\mathrm{site}\le \tfrac{3}{4}$. The map $\pi$ is well-defined seen from $V_0$ to $V_1$. When $V_0$ and $V_1$ are viewed as graphs\footnote{respectively as an $(a,2a)$-net of $\mathcal{G}$ and a $(a,a)$-net of $\mathsf{Cay}(\mathbb{Z}^2,\pi(S))$}, this map between vertex-sets satisfies the following properties:
\begin{itemize}
\item $\pi : V_0\to V_1$ is surjective,
\item for every $x\in V_1$, every $g\in U_x$ and every $V_1$-neighbour $y$ of $x$, there is a $V_0$-neighbour $h$ of $g$ such that $\pi(g)=y$.
\end{itemize}
Indeed, the same proof as that of $2a$-density yields an $h$ such that $$d_S(g,h)< 4\times a+a< 4\times 2 a.$$
By the forthcoming Proposition~\ref{prop:mono}, the existence of such a map $\pi: V_0\to V_1$ implies that $p_c^\mathrm{site}(V_0)\le p_c^\mathrm{site}(V_1)\leq \tfrac{3}{4}$.
\end{proof}

\subsection{Converging sequences have  controlled nets}

We are now able to prove Proposition~\ref{prop}.
Let us take $(\mathcal{G}_k)$ a sequence of transitive graphs such $\forall k,~p_c(\mathcal{G}_k)<1$. Assume that $\mathcal{G}_k$ converges locally to some transitive graph $\mathcal{G}_\infty$ of polynomial growth. We prove that there exists a constant $C$ such that $\{\mathcal G_k, \,k\ge C\}$ has $C$-controlled nets.

By \cite[Theorem 1.3]{tesseratointon}, up to forgetting finitely many terms of the sequence $(\mathcal{G}_k)$, we can do the following:
\begin{enumerate}
\item we assume that all $\mathcal{G}_k$'s have polynomial growth,
\item \label{item:deux} we  fix some constant $A\ge 1$, Cayley graphs of nilpotent groups $\mathsf{Cay}(G_k,S_k)$ and maps $\varphi_k : G_k \to V(\mathcal{G}_k)$ such that every $\varphi_k$ is an $A$-quasi-isometry.
\end{enumerate}
Recall that $\varphi_k$ being an $A$-quasi-isometry means that
$$
\forall g,h\in G_k, \qquad \tfrac{1}{A}d(\varphi_k(g),\varphi_k(h))-A\le d(g,h)\le Ad(\varphi_k(g),\varphi_k(h))+A
$$
and $\varphi_k(G_k)$ is $A$-dense in $\mathcal{G}_k$.

The fact that $p_c$ is smaller than 1 is preserved by quasi-isometries: see \cite[Theorem 7.15]{lyonsperes17} and the comment following its proof. Therefore, for every $k$, the group $G_k$ is not finite or two-ended. Since $G_k$ is nilpotent, it thus has to be one-ended. As a result, Lemma~\ref{lem:2} applies to $\mathsf{Cay}(G_k,S_k)$.

Let $a\ge 1$ and $a'=aA+A^2$. By Lemma~\ref{lem:2}, we can fix some $(a',2a')$-net $V_k$ in $\mathsf{Cay}(G_k,S_k)$ satisfying $p_c^\mathrm{site}\le \tfrac{3}{4}$. Setting $V_k'=\varphi_k(V_k)$ thus defines an $(a,2a'A+A)$-net of $\mathcal{G}_k$. Furthermore, the map $\varphi_k$ seen from $V_k$ to $V_k'$ is an injective graph homomorphism, when $V_k$ is considered as an $(a',2a')$-net and $V'_k$ as an $(a,2a'A+A)$-net. Therefore, we have $p_c^\mathrm{site}(V'_k)\le p_c^\mathrm{site}(V_k)\le \tfrac{3}{4}$. Taking $C=5A^3$ completes the proof of Proposition~\ref{prop}. \hfill \qed 

\vspace{0.4cm}

\begin{remark} Seeing why \cite[Theorem 1.3]{tesseratointon} indeed guarantees Item~\ref{item:deux} requires a basic tool of geometric group theory: the Milnor--Schwarz Lemma. We only need the following particular case: {\it For every $A\ge 1$, there is some constant $B$ such that the following holds. Let $G$ be a group generated by a finite subset $S$. Let $H$ be a subgroup of $G$ of index at most $A$. Then, $H$ admits a finite generating subset $S_H$ such that there is a $B$-quasi-isometry from $\mathsf{Cay}(H,S_H)$ to $\mathsf{Cay}(G,S)$.} This is proved by following the proofs of Theorem~8.37 and Corollary~8.47 in \cite{drutukapovich} and observing that $B$ depends only on $A$.
Tessera and Tointon did not state their Theorem 1.3 as our Item~\ref{item:deux} because they wanted a stronger result where, up to tolerating a bounded index subgroup, the multiplicative constant of the quasi-isometry can be taken equal to 1.
\end{remark}

\section{Proof of Theorem~\ref{thm}}
\label{sec:proof-theorem-refthm}

In this section, we use Proposition~\ref{prop} to perform a renormalisation argument yielding locality for transitive graphs of polynomial growth.

We first recall a useful lemma about $k$-independent percolation processes.
A site percolation process $\mathbb{P}$ is called \defini{$k$-independent} if, for any two sets of vertices $U_1$ and $U_2$ satisfying $$\min \{d(x_1,x_2):x_1\in U_1,~x_2\in U_2\}>k,$$ the restriction $X_{|U_1}$ is independent of $X_{|U_2}$, where $X$ denotes a $\mathbb{P}$-distributed random variable. Recall that $\mathbb{P}_1$ \defini{stochastically dominates} $\mathbb{P}_2$ if there is a coupling $(X_1,X_2)$ of $(\mathbb{P}_1,\mathbb{P}_2)$ such that every $X_2$-open site is always $X_1$-open.

\begin{lemma}
  \label{lem:3}
  Let $D$ and $k$ be two positive integers. There exists $q=q(k,D)<1$ such that the following holds.   For every connected graph $\mathcal{H}$ with maximal degree at most $D$, any $k$-independent site percolation process on $\mathcal{H}$ with marginals at least $q$ stochastically dominates independent site percolation on $\mathcal{H}$ of parameter $\tfrac{3}{4}$. 
\end{lemma}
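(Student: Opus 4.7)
The statement is a bounded-degree version of the Liggett--Schonmann--Stacey domination theorem for finitely-dependent percolation fields; the plan is the standard sequential-coupling argument. Enumerate $V(\mathcal{H}) = \{v_1, v_2, \ldots\}$, write $X_i := \mathbf{1}\{v_i \text{ open}\}$, and set $N_i := \{j < i : d(v_i, v_j) \le k\}$. The degree bound yields $|N_i| \le M := \sum_{j=0}^{k} D^j$, a constant depending only on $k$ and $D$, and by $k$-independence the conditional law of $X_i$ given $(X_j)_{j<i}$ coincides with its conditional law given $(X_j)_{j \in N_i}$.

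The core of the argument is to choose $q = q(k, D)$ close enough to $1$ so that, for every $i$ and every admissible configuration $(\varepsilon_j)_{j \in N_i} \in \{0,1\}^{N_i}$,
\[
\mathbb{P}\bigl(X_i = 1 \,\big|\, X_j = \varepsilon_j \text{ for all } j \in N_i\bigr) \ge \tfrac{3}{4}.
\]
Granted this, let $(U_i)$ be i.i.d.\ uniform on $[0,1]$, set $Y_i := \mathbf{1}\{U_i \le 3/4\}$, and build $(X_i)$ sequentially by declaring $X_i = 1$ whenever $U_i \le \mathbb{P}(X_i = 1 \mid X_1, \ldots, X_{i-1})$. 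The displayed inequality guarantees $X_i \ge Y_i$ almost surely, which is exactly the required stochastic domination.

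The main obstacle is the conditional estimate itself. The naive bound $\mathbb{P}(X_i = 0 \mid E) \le \mathbb{P}(X_i = 0)/\mathbb{P}(E)$ is useless when the conditioning event $E$ has small probability, which is precisely what happens when many $\varepsilon_j$ are equal to $0$ (rare events under large marginals). The standard remedy is an induction on the number of indices $j \in N_i$ with $\varepsilon_j = 0$: one partitions such indices into maximal subsets of pairwise distance $>k$, uses $k$-independence to factor the corresponding probabilities, and then applies the marginal bound $\mathbb{P}(X_j = 0) \le 1 - q$ together with $|N_i| \le M$. Equivalently, a Lovász-local-lemma-style estimate on the dependency graph of vertices within distance $k$ (a graph of degree $\le M$) provides a clean quantitative bound: if $eM(1-q) < 1$, then every conditional closed-probability is $O(1-q)$ uniformly in the conditioning.

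Taking $q$ close enough to $1$ in terms of $M = M(k, D)$ forces this uniform bound below $\tfrac14$, which completes the sequential coupling and hence the proof. The dependence $q = q(k, D)$ is explicit but not needed here.
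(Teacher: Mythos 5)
Your route is a from-scratch proof of the Liggett--Schonmann--Stacey domination theorem, whereas the paper simply cites \cite[Theorem~1.3]{liggett1997domination} for $1$-independent fields and reduces the $k$-independent case to it by passing to the power graph $\mathcal{H}^{(k)}$ (same vertices, edges between distinct vertices at $\mathcal H$-distance at most $k$), whose degree is bounded in terms of $k$ and $D$; that reduction is the entire content of the paper's proof. Your direct argument, however, has a genuine gap at its foundation: the assertion that ``by $k$-independence the conditional law of $X_i$ given $(X_j)_{j<i}$ coincides with its conditional law given $(X_j)_{j\in N_i}$'' is false. The definition of $k$-independence only says that the restrictions of $X$ to two sets at distance $>k$ are \emph{unconditionally} independent; it is not a Markov property, and it does not imply $X_i \perp (X_j)_{j<i,\,j\notin N_i}$ \emph{given} $(X_j)_{j\in N_i}$. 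For a counterexample take $k=1$ and a path $a-b-c$: with $X_a,X_c$ independent fair coins and $X_b=X_a\oplus X_c$, the field is $1$-independent (the only pair of sets at distance $>1$ is $\{a\},\{c\}$), yet $X_c$ is a deterministic function of $(X_a,X_b)$ while it is uniform given $X_b$ alone. This failure is precisely why the LSS theorem is nontrivial.

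Because of this, your ``core estimate'' $\mathbb{P}(X_i=1\mid X_j=\varepsilon_j,\ j\in N_i)\ge \tfrac34$ would not close the sequential coupling even if proved: the coupling requires a uniform lower bound on $\mathbb{P}(X_i=1\mid X_1,\dots,X_{i-1})$ conditioned on the \emph{full} past. The correct version of the induction (which is what LSS actually do) bounds $\mathbb{P}(X_i=0\mid E)$ for an \emph{arbitrary} finite conditioning event $E=E_{\mathrm{near}}\cap E_{\mathrm{far}}$, using $k$-independence only to decouple $X_i$ from $E_{\mathrm{far}}$ in the numerator, and the chain rule together with the induction hypothesis to lower-bound $\mathbb{P}(E_{\mathrm{near}}\mid E_{\mathrm{far}})\ge\rho^{M}$ in the denominator, leading to a fixed-point inequality of the shape $1-\rho\le(1-q)\rho^{-M}$. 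Your Lov\'asz-local-lemma alternative does not repair the gap either: the LLL controls conditioning on the \emph{non-occurrence} of bad events, whereas here one must condition on configurations containing $X_j=0$, i.e.\ on the occurrence of rare events. Either carry out the LSS induction over arbitrary conditioning sets in full, or do what the paper does and quote \cite{liggett1997domination} after the power-graph reduction.
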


\begin{proof}
    By \cite[Theorem~1.3]{liggett1997domination}, for every constant $D$, we can fix some constant $p_D<1$ such that for every connected graph $\mathcal{H}$ with maximal degree at most $D$, any $1$-independent site percolation process on $\mathcal{H}$ with marginals at least $p_D$ stochastically dominates independent site percolation on $\mathcal{H}$ of parameter $\tfrac{3}{4}$. Notice that if $\mathcal{H}$ has maximal degree at most $D$, then any $k$-independent site percolation process on $\mathcal{H}$ with marginals at least $p_{D^k}$ stochastically dominates independent site percolation on $\mathcal{H}$ of parameter $\tfrac{3}{4}$. Indeed, any $k$-independent site percolation process on $\mathcal{H}$ is $1$-independent when considered on $\mathcal{H}^{(k)}$, where $V(\mathcal{H}^{(k)})=V(\mathcal{H})$ and $E(\mathcal{H}^{(k)})=\{\{x,y\}~:~0<d_{\mathcal{H}}(x,y)\le k\}$. Setting $q(k,D)=p_{D^k}$ thus yields the lemma.
\end{proof}

We also recall a classical lower bound for distances in nets, which will help us prove that some auxiliary site percolation process is $80$-independent.

\begin{lemma}\label{lem:geom}
Let $V_0$ be an $(a,b)$-net of some graph $\mathcal{G}$.
Then, for any $u,v\in V_0$, we have:

$$d_{V_0}(u,v)\le \tfrac{1}{b}d_\mathcal{G}(u,v).$$
\end{lemma}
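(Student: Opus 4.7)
The plan is to construct a short walk in $V_0$ from $u$ to $v$ by tracing a $\mathcal{G}$-geodesic and projecting its waypoints onto the net. Set $n := d_\mathcal{G}(u,v)$ and fix a $\mathcal{G}$-geodesic $u = x_0, x_1, \ldots, x_n = v$. Subsample it at indices $0 = i_0 < i_1 < \cdots < i_M = n$ with consecutive spacing at most $2b$ and $M = \lceil n/(2b) \rceil$.

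For each interior index $j$, apply the $b$-density of $V_0$ to pick a net point $z_j \in V_0$ with $d_\mathcal{G}(x_{i_j}, z_j) \leq b$, and set $z_0 := u$, $z_M := v$. By the triangle inequality in $\mathcal{G}$,
$$
d_\mathcal{G}(z_j, z_{j+1}) \;\leq\; d_\mathcal{G}(z_j, x_{i_j}) + (i_{j+1} - i_j) + d_\mathcal{G}(x_{i_{j+1}}, z_{j+1}) \;\leq\; b + 2b + b \;=\; 4b,
$$
so consecutive $z_j$'s are either equal or adjacent in the net graph. Hence $(z_0, z_1, \ldots, z_M)$ is a walk in $V_0$ from $u$ to $v$ of length at most $M$, which gives
$$
d_{V_0}(u,v) \;\leq\; M \;\leq\; \frac{d_\mathcal{G}(u,v)}{b},
$$
as required.

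The whole proof is little more than a one-step application of the triangle inequality, combined with the defining properties of a $b$-dense net; there is no real conceptual obstacle. The only delicate point is calibrating the subsampling step size so as to simultaneously satisfy two competing demands: it must be at most $2b$, since otherwise two consecutive projected samples could be more than $4b$ apart in $\mathcal{G}$ and fail to be adjacent in $V_0$; yet it should be as close to $2b$ as possible, so that the total number of samples $M$ is small enough to produce the stated rate $1/b$ rather than a weaker rate. Taking spacing exactly $2b$ (adjusted at the last step so that $i_M = n$) threads this needle, and the corner case $d_\mathcal{G}(u,v) = 0$ is trivial.
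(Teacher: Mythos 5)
Your proof is correct and follows essentially the same route as the paper's: subsample a $\mathcal{G}$-geodesic at spacing roughly $2b$, project the waypoints onto $V_0$ using $b$-density, and note that consecutive projections lie within $4b$ of each other, hence are adjacent in the net. (Both your count $M=\lceil n/(2b)\rceil\le n/b$ and the paper's analogous bound silently fail in the degenerate case $0<d_\mathcal{G}(u,v)<b$, where the lemma as literally stated is false anyway; this is harmless for the application, which only uses the bound for well-separated pairs.)
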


\begin{proof}
Let $u,v\in V_0$. Let $(u_0,\dots,u_\ell)$ be a shortest path from $u_0=u$ to $u_\ell=v$ in $\mathcal{G}$. Set $B=\lfloor 2b\rfloor\ge b$. By keeping only vertices of the form $u_{Bk}$ and the final vertex $u_\ell$, we obtain a sequence of vertices $(v_0,\dots,v_L)$ with $L\le\tfrac{\ell}{B}$ and such that $d_{\mathcal{G}}(v_i,v_{i+1})\le B$. Except for $v_0$ and $v_L$, these vertices have no reason to belong to $V_0$. Therefore, we define $(w_0,\dots,w_L)$ by setting $w_0=u$, $w_L=v$, and $w_i$ any vertex of $V_0$ such that $d_{\mathcal{G}}(v_i,w_i)\le b$ when $i\in\{1,\dots,L-1\}$. This sequence defines a path from $u$ to $v$ in $V_0$ and its length is at most $\tfrac{\ell}{b}=\tfrac{1}{b}d_\mathcal{G}(u,v)$.
\end{proof}

We are now able to prove Theorem~\ref{thm}.

\begin{proof}[Proof of Theorem~\ref{thm}]
Let  $\mathcal{G}_\infty$ be a transitive graph of polynomial growth.  Let $(\mathcal{G}_k)_{k\in \mathbb N}$ be a sequence of transitive graphs such that $p_c(\mathcal{G}_k)<1$ for every $k\in \mathbb N$ and  $\mathcal{G}_k \xrightarrow[k\to\infty]{}\mathcal{G}_\infty$.
Fix $p>p_c(\mathcal G_\infty)$. We will prove that for every $k$ large enough, there is an infinite cluster for percolation of parameter $p$ on $\mathcal{G}_k$ with positive probability.

\textbf{We first work on $\mathcal G_\infty$} and use that $p$ is supercritical to build a suitable finite-size event for percolation on  $\mathcal G_\infty$. For $v\in V(\mathcal{G}_\infty)$ and $n\ge 1$, define the event $\mathsf{E}_n(v)$ by the conjunction of the following two events for $p$-percolation on $\mathcal{G}_\infty$:
  \begin{itemize}
  \item there is a cluster intersecting $B_n(v)$ and touching the $10n$-sphere centred at $v$,
  \item any two open paths intersecting $B_{2n}(v)$ and touching the $5n$-sphere centred at $v$ are connected by an open path that lies within $B_{5n}(v)$.
  \end{itemize}
By \cite[Proposition~1.3]{supercriticalCMT2021}, for every $v$, the event $\mathsf{E}_n(v)$ has a probability converging to 1 when $n$ goes to infinity. Notice that, by transitivity, this probability depends on $n$ but not on $v$.

Let us quantify how large we need $\mathbb{P}(\mathsf{E}_n)$ to be.
By  \cite{bass72,guivarch73, gromov81, trofimov}, there exists an integer  $d\ge 2$ and a constant $c\ge 1$ such that the balls of $\mathcal G_\infty$ satisfy
\begin{equation}
  \label{eq:1}
  \tfrac1{c}r^d\le |B_r|\le cr^d
\end{equation}
for every $r\ge1$.  Let $C$  be such that the conclusion of Proposition~\ref{prop} holds. For reasons that will make sense shortly, define
\begin{equation}
  \label{eq:3}
  D=c^2\cdot  (12C+3)^d
\end{equation}
and fix some $n\ge 9C$ such that
\begin{equation}
\mathrm{P}_p(\mathsf{E}_n)\ge q(80,D).\label{eq:4}
\end{equation}
In this inequality, $q(80,D)$ is defined so that the conclusion of Lemma~\ref{lem:3} holds. Set  $a=\tfrac{n}{4C}$. For every $k\ge C$, fix $V_k$ an $(a,Ca)$-net of $\mathcal{G}_k$ such that $p_c^\mathrm{site}(V_k)\le \tfrac{3}{4}$. Observe that every $V_k$ has maximal degree at most $D$. Indeed, if some vertex $v$ has $V_k$-neighbours $v_1,\dots,v_m$, then the balls $B_{a/3}(v_i)$ of $\mathcal{G}_\infty$ are disjoint subsets of $B_{(4C+1)a}(v)$. This entails $m\tfrac{1}{c}\left(\tfrac{a}{3}\right)^d\le c(4C+1)^da^d$, whence $m\le D$.

\begin{figure}[htbp]
	\centering
	\includegraphics[scale=0.7]{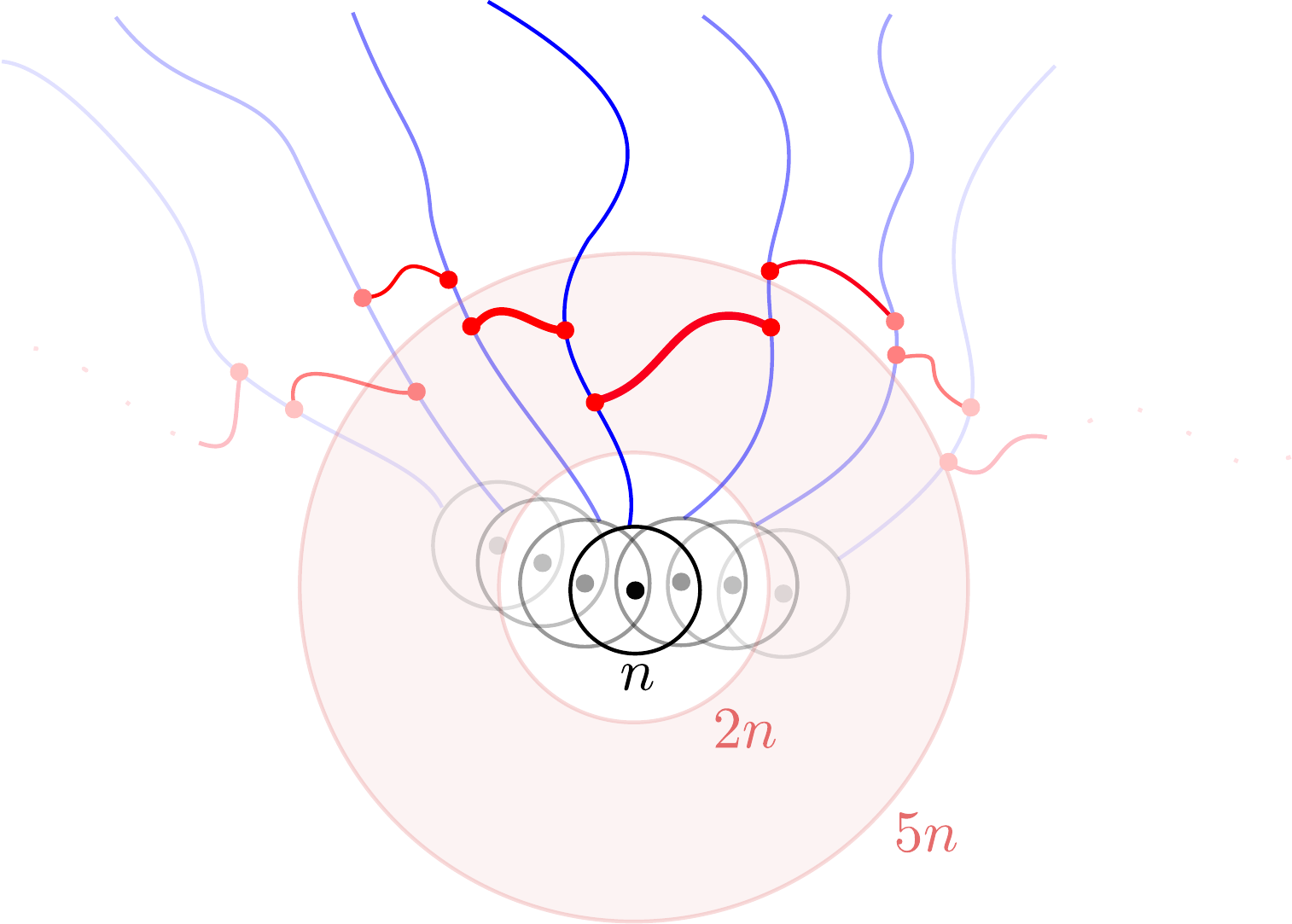}
	\caption{An open path in $\eta$ produces an open path in $\omega$ as a consequence of the gluing effect of the uniqueness zones between scale $2n$ and $5n$.}\label{fig:1}
\end{figure}

\textbf{We now work  on $\mathcal G_k$} and export the finite-size criterion constructed on $\mathcal G_\infty$ to $\mathcal G_k$, for $k$ large. This will show that $p$ is also supercritical on these graphs. 
For $k\in\mathbb{N}$ and $v\in V(\mathcal{G}_k)$, we define $\mathsf{E}_{n,k}(v)$ by the same conditions as above, but in $\mathcal{G}_k$ rather than $\mathcal{G}_\infty$. 
Given a percolation configuration $\omega$ on $\mathcal{G}_k$, we associate a site percolation configuration $\eta$ on $V_k$ by declaring $v\in V_k$ to be open if and only if $\mathsf{E}_{n,k}(v)$ holds.
This process enjoys the following two properties:
  \begin{itemize}
  \item if there is an infinite open path for $\eta$, then there is one for $\omega$ (see Figure~\ref{fig:1}),
  \item $V_k$ being endowed with its graph-structure of $(a,Ca)$-net, the process $\eta$ is $\tfrac{20n}{Ca}$-independent.
  \end{itemize}
The second property follows from Lemma~\ref{lem:geom}.

By definition of $a$, we have $\tfrac{20n}{Ca}=80$. By taking $k$ large enough, we may assume that the $10n$-ball of $\mathcal{G}_k$ is isomorphic to that of $\mathcal{G}_\infty$.  Therefore, $\mathrm{P_p}(\mathsf{E}_{n,k})=\mathrm{P}_p(\mathsf{E}_n)\ge q(D,80)$. By definition of $q(D,80)$ and because $p_c^\mathrm{site}(V_k) \le \tfrac{3}{4}$, the process $\eta$ yields an infinite cluster with positive probability, hence so is the case for $\omega$. We thus get $p_c(\mathcal{G}_k)\ge p$, as desired.
\end{proof}

\section{Monotonicity of $p_c$ revisited}\label{sec:monot-p_c-revis}

In this section, we revisit monotonicity of $p_c$ relative to the quotient operation, i.e. relative to covering maps. The original result is in \cite[Theorem 1]{MR1423907}, see also \cite{martseve2019monotone}. Our proof below is exactly the same. However, we state the proposition in a more general way that emphasises exactly which properties of the ``covering map'' $\pi$ are required for the argument to hold.

\begin{proposition}\label{prop:mono}
Let $\mathcal{G}$ and $\mathcal{G}'$ be two locally finite graphs with countably many vertices. Let $\pi : V(\mathcal{G})\to V(\mathcal{G}')$ be a surjective map.
Assume that for every $u\in V(\mathcal{G})$, for every $y$ neighbour of $\pi(u)$, there is at least one neighbour $v$ of $u$ such that $\pi(v)=y$.

Then, $p_c^\mathrm{site}(\mathcal{G'})\leq p_c^\mathrm{site}(\mathcal{G})$.
\end{proposition}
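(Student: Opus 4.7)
The plan is to follow the Benjamini--Schramm coupling strategy, adapted to the one-sided path-lifting hypothesis of the proposition, and to produce a joint law $(\omega,\omega')$ of Bernoulli $p$-site percolations on $\mathcal{G}$ and $\mathcal{G}'$ together with an injection of the open cluster $\mathcal{C}_{\omega}(v_{0})$ of a base vertex $v_{0}\in V(\mathcal{G})$ into the open cluster $\mathcal{C}_{\omega'}(x_{0})$ of its image $x_{0}:=\pi(v_{0})$. Such an injection immediately yields $\theta_{v_{0}}(p,\mathcal{G})\le \theta_{x_{0}}(p,\mathcal{G}')$ for every $p$, whence $p_{c}^{\mathrm{site}}(\mathcal{G}')\le p_{c}^{\mathrm{site}}(\mathcal{G})$. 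Note that the direction of the injection is now from $\mathcal{G}$ into $\mathcal{G}'$ (opposite to the naive BS coupling), so the choice of exploration must be adjusted accordingly.

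To construct the coupling I sample $\omega$ first as an i.i.d.\ Bernoulli$(p)$ field on $V(\mathcal{G})$, and run a breadth-first exploration of $\mathcal{C}_{\omega}(v_{0})$ in $\mathcal{G}$. Simultaneously, I grow an injection $\Phi\colon \mathcal{C}_{\omega}(v_{0})\hookrightarrow V(\mathcal{G}')$ and assign the values of $\omega'$ on $\Phi(\mathcal{C}_{\omega}(v_{0}))$, starting from $\Phi(v_{0}):=x_{0}$ and $\omega'(x_{0}):=\omega(v_{0})=1$. At each step, when the BFS reaches a newly discovered open $\mathcal{G}$-neighbour $v$ of some already-placed $u$, I use the lifting hypothesis applied at $u$ to select a previously unused $\mathcal{G}'$-neighbour $y$ of $\Phi(u)$ that matches $v$ through $\pi$, set $\Phi(v):=y$, and declare $\omega'(y):=\omega(v)=1$. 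Outside of $\Phi(\mathcal{C}_{\omega}(v_{0}))$, I fill in $\omega'$ using fresh independent Bernoulli$(p)$ coins.

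Once the construction is run consistently, $\Phi$ is injective by design, and $\Phi(\mathcal{C}_{\omega}(v_{0}))\subseteq \mathcal{C}_{\omega'}(x_{0})$ because each edge $\{u,v\}$ of the BFS yields a $\mathcal{G}'$-edge $\{\Phi(u),\Phi(v)\}$ that is open at both endpoints. The marginals of $\omega'$ come out i.i.d.\ Bernoulli$(p)$ because each value $\omega'(y)$ set by the exploration reads a coordinate of $\omega$ never consulted before---guaranteed by the invariant that $v=\Phi^{-1}(y)$ is a brand-new $\mathcal{G}$-vertex whenever $y$ is brand-new.

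The main obstacle is a direction mismatch: the hypothesis produces $\mathcal{G}$-neighbours of $u$ from $\mathcal{G}'$-neighbours of $\pi(u)$, whereas the BFS naturally proceeds through $\mathcal{G}$-neighbours of $u$ and then asks for $\mathcal{G}'$-neighbours of $\Phi(u)$. The resolution is to invert the order of examination within each BFS step: when processing $u$, one first enumerates the $\mathcal{G}'$-neighbours $y$ of $\Phi(u)$ still free, then lifts each to a $\mathcal{G}$-neighbour $v$ of $u$ via the hypothesis, and only admits into the BFS those $\mathcal{G}$-edges $\{u,v\}$ that arise as such lifts. The delicate point---and the technical heart of the proof---is to verify that this restricted exploration still captures enough of the open $\omega$-cluster to preserve the injection, i.e.\ that no open path from $v_{0}$ is lost by the restriction, which ultimately relies on the surjectivity of $\pi$ and the fact that every relevant $\mathcal{G}'$-neighbour of $\Phi(u)$ is tested at the moment $u$ is processed.
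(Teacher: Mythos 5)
Your proposal breaks exactly at the point you yourself flag as ``the delicate point'', and that point is not a technicality: the restricted exploration does \emph{not} capture the whole open cluster $\mathcal{C}_\omega(v_0)$, and surjectivity of $\pi$ cannot repair this. The hypothesis only lifts edges of $\mathcal{G}'$ to edges of $\mathcal{G}$; it says nothing about $\pi(w)$ when $w$ is a $\mathcal{G}$-neighbour of $u$ (indeed $\pi$ is not assumed to be a graph homomorphism, and even when it is, an open $w\sim u$ need not arise as the lift of any still-free $\mathcal{G}'$-neighbour of $\Phi(u)$). So open upstairs paths are genuinely lost, and the injection $\mathcal{C}_\omega(v_0)\hookrightarrow\mathcal{C}_{\omega'}(x_0)$, hence $\theta(p,\mathcal{G})\le\theta(p,\mathcal{G}')$, cannot be obtained this way. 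In fact it cannot be obtained at all: the abelianization map $\pi\colon \mathsf{Cay}(F_2,\{a^{\pm1},b^{\pm1}\})\to\mathsf{Cay}(\mathbb{Z}^2,\{e_1^{\pm},e_2^{\pm}\})$ satisfies every hypothesis, yet $p_c^{\mathrm{site}}$ of the $4$-regular tree equals $1/3$ while $p_c^{\mathrm{site}}(\mathbb{Z}^2)>1/2$. The inequality displayed in Proposition~\ref{prop:mono} is a typo: what the paper's proof establishes, and what is used in the proof of Lemma~\ref{lem:2}, is $p_c^{\mathrm{site}}(\mathcal{G})\le p_c^{\mathrm{site}}(\mathcal{G}')$ --- the graph with the path-lifting property (the ``cover'') has the \emph{smaller} critical parameter, as in Benjamini--Schramm.

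The constructive part of your last paragraph is, however, exactly the right mechanism for the correct inequality, and it is essentially the paper's proof with the roles of the two random fields exchanged (the paper samples the i.i.d.\ field downstairs and lifts the exploration; you sample upstairs and read the coins through the lifts). Processing each placed open $u$ by enumerating \emph{all} still-free $\mathcal{G}'$-neighbours $y$ of $\Phi(u)$, lifting each to a $\mathcal{G}$-neighbour $v$ of $u$ with $\pi(v)=y$, and setting $\omega'(y):=\omega(v)$ yields: (i) a \emph{complete} exploration of the downstairs cluster $\mathcal{C}_{\omega'}(x_0)$, with $\omega'$ genuinely i.i.d.\ Bernoulli$(p)$ because each queried downstairs vertex reads a never-before-consulted upstairs coordinate (any previously consulted upstairs vertex lies in the fibre of a previously treated downstairs vertex, so fibres of fresh $y$'s are untouched); and (ii) a status-preserving injection $y\mapsto v$ of $\mathcal{C}_{\omega'}(x_0)$ into $\mathcal{C}_{\omega}(v_0)$ --- the direction opposite to the one you aim for. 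This gives $\theta(p,\mathcal{G}')\le\theta(p,\mathcal{G})$ and hence $p_c^{\mathrm{site}}(\mathcal{G})\le p_c^{\mathrm{site}}(\mathcal{G}')$. So: keep the inverted exploration, drop the attempt to recover the full upstairs cluster, and reverse the concluding inequality.
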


\begin{remark}If $\mathcal{G}'$ is connected and $\mathcal{G}$ is non-empty, then any $\pi : V(\mathcal{G})\to V(\mathcal{G}')$ satisfying the main assumption of the lemma is automatically surjective. Also note that $\pi$ is not assumed to be a graph homomorphism.
\end{remark}

\begin{proof}
Let $p>p_c^\mathrm{site}(\mathcal{G}')$. We can thus pick $o\in V(\mathcal{G}')$ such that $\mathbb{P}_p(o \leftrightarrow \infty)>0$. Since $\pi$ is onto, we can pick $\tilde{o}\in V(\mathcal{G})$ such that $\pi(\tilde{o})=o$. It suffices to prove that $\mathbb{P}_p(\tilde{o} \leftrightarrow \infty)\ge \mathbb{P}_p(o \leftrightarrow \infty)$, as this implies $p\ge p_c^\mathrm{site}(\mathcal{G})$.

We consider an exploration $(O_n,C_n)$ of the cluster of $o$ in $\mathcal{G}$, where $O_n$ and $C_n$ represent the open and closed vertices revealed up to  step $n$. The exploration starts with $(O_0,C_0)=(\{o\},\emptyset)$ if the origin $o$ is open, and we set $(O_0,C_0)=(\emptyset,\{o\})$ if the origin is closed. At  step $n$,  we pick (if it exists) an edge $(x_n,y_n)$  where   $x_n\in O_n$ is an explored  open vertex  and  $y_n\in V(\mathcal G)\setminus (O_n\cup C_n)$  is an unexplored vertex. The exploration stops if the edge $(x_n,y_n)$ does not exist.
If the exploration does not stop, we set
\begin{equation}
  \label{eq:5}
  (O_{n+1},C_{n+1})=
  \begin{cases}
    (O_n\cup\{y_n\},C_n)&\text{ if $y_n$ is open,}\\
    (O_n,C_n\cup\{y_n\})&\text{ if $y_n$ is closed,}
  \end{cases}
\end{equation}
and move to the next step.

This exploration can be lifted to define an exploration $(O_n',C_n')$ of (a subset of) the cluster of $o'$ in $\mathcal G'$. To do so, we start with $(O_0',C_0')=(\{o'\},\emptyset)$ if the origin $o$ in $\mathcal G$ is open, and we set $(O_0,C_0)=(\emptyset,\{o'\})$ otherwise. At step $n$, when we pick an edge $(x_n,y_n)$ in $\mathcal{G}$,  we can choose an edge $(u_n,v_n)$ in $\mathcal G'$ such that  $\pi(u_n)=x_n$, $\pi(v_n)=y_n$,  $u_n\in O_n$ and  $y_n\in V(\mathcal{G}') \setminus (O_n'\cup C_n')$, and then we define
\begin{equation}
  \label{eq:6}
  (O_{n+1}',C_{n+1}')=
  \begin{cases}
    (O_n'\cup\{v_n\},C_n')&\text{ if $y_n$ is open},\\
    (O_n',C_n'\cup\{v_n\})&\text{ if $y_n$ is closed}.
  \end{cases}
\end{equation}
Such a choice of $(u_n,v_n)$ is always possible due to our assumption on $\pi$.
If the  exploration in $\mathcal G$  never stops (which corresponds to the cluster of $o$ being infinite), then the lifted exploration does not stop either, which implies that the cluster of $o'$ is also infinite. 
\end{proof}

\newcommand{\etalchar}[1]{$^{#1}$}

\end{document}